\numberwithin{equation}{section}
\let\f=\frac
\let\th=T
\newcommand{\beq}{\begin{equation}}
\newcommand{\eeq}{\end{equation}}
\newcommand{\ben}{\begin{eqnarray}}
\newcommand{\een}{\end{eqnarray}}
\newcommand{\beno}{\begin{eqnarray*}}
\newcommand{\eeno}{\end{eqnarray*}}
\newtheorem{theorem}{Theorem}[section]
\newtheorem{definition}[theorem]{Definition}
\newtheorem{lemma}[theorem]{Lemma}
\newtheorem{proposition}[theorem]{Proposition}
\newtheorem{corol}[theorem]{Corollary}
\begin{document}

\title[Threshold solutions of energy-critical CGL]{Threshold solutions of the energy-critical complex Ginzburg-Landau equation}
\author{Xing Cheng$^{*}$ and Yunrui Zheng$^{**}$  }

\thanks{$^*$ School of Mathematics, Hohai University, Nanjing 210098, Jiangsu,   China.
\texttt{chengx@hhu.edu.cn}}

\thanks{$^{**}$ School of Mathematics, Shandong University, Shandong 250100, Jinan,  China.
\texttt{yunrui\_zheng@sdu.edu.cn}}

\thanks{$^{*}$ X. Cheng has been partially supported by the NSF of Jiangsu Province (Grant No. BK20221497).}	

\thanks{$^{**}$ Y. Zheng was supported by NSFC Grant 11901350, and The Fundamental Research Funds of Shandong University.}

\begin{abstract}
	In this article, we consider energy-critical complex Ginzburg-Landau equation in three and four dimensions. We give the dynamics when the energy of the initial data is equal to the energy of the stationary solution.

\bigskip

\noindent \textbf{Keywords}: Complex Ginzburg-Landau equation, energy-critical, well-posedness, ground state, blow up

\bigskip

\noindent \textbf{Mathematics Subject Classification (2020)} Primary: 35Q56; Secondary: 35Q35, 35B40

\end{abstract}
\maketitle

\section{Introduction}
In this paper, we consider energy-critical complex Ginzburg-Landau (CGL) equation
\begin{align} \label{eq:gl}
  \left \{
  \begin{aligned}
  & 
   u_t - z \Delta u =  z |u|^{\f {4}{d-2}} u \quad \text{in} \ \mathbb{R}^d,\\
  & u(t=0) = u_0 \in \dot{H}^1(\mathbb{R}^d),
  \end{aligned}
  \right.
\end{align}
where $z \in \mathbb{C}$, $\Re z > 0$, $(t,x) \in \mathbb{R}_+ \times \mathbb{R}^d$, $d = 3, 4$. This equation is the $L^2$ gradient flow of the energy functional
\begin{align*}
E(u) = \int \frac12 |\nabla u|^2 - \frac{d-2}{2d} |u|^\frac{2d}{d-2} \,\mathrm{d}x.
\end{align*}
It is well-known that the equation \eqref{eq:gl} has a stationary solution $W$ in $\dot{H}^1$ $(d = 3, 4)$, defined by
\begin{align}\label{eq1.2v11}
    W(x) = \frac1{ \left( 1 + \frac{|x|^2}{d(d-2)} \right)^\frac{d-2}2},
\end{align}
such that $W$ satisfies
$- \Delta W = |W|^\frac4{d-2}W.$

The general complex Ginzburg-Landau equation is of the form
\begin{align*}
    \partial_t u = \gamma u + ( a+ i \alpha) \Delta u - (b+i\beta) |u|^{p-1} u,
\end{align*}
where $u$ is a complex-valued function of $(t,x) \in \mathbb{R}_+ \times \mathbb{R}^d$, $a, b, \alpha, \beta, \gamma \in \mathbb{R}$, and $p > 1$. This class of equations could be derived from B\'enard convection and turbulence. See for instance \cite{GJL}. The complex Ginzburg-Landau equations are closely related to the semi-linear heat equation and nonlinear Schr\"odinger equation.
In general, the inviscid limit of the complex Ginzburg-Landau equation is the nonlinear Schr\"odinger equation, and the rigorous limit theory can be seen in \cite{CZ,W0,Wu}.

In \cite{CGZ}, by following the argument in \cite{GR,KK11,KM}, we have proven the dichotomy of global well-posedness versus finite time blow-up of \eqref{eq:gl} when $d=3, 4$, that is
	\begin{theorem} \label{th1.1v3}
		Let $u_0 \in \dot{H}^1(\mathbb{R}^d )$ and $E(u_0) <  E(W)$, where $W$ is the stationary solution of \eqref{eq:gl} given by \eqref{eq1.2v11}. Then we have
		\begin{enumerate}
			\item When $ \|\nabla u_0 \|_{L^2} < \|\nabla W\|_{L^2}$, the solution $u$ of \eqref{eq:gl} is global and satisfies
			$\lim\limits_{t \to \infty} \|u(t) \|_{\dot{H}^1} = 0$.
			\item When $\|\nabla u_0 \|_{L^2} > \|\nabla W\|_{L^2}$ and $u_0 \in H^1(\mathbb{R}^d )$, the solution $u$ of \eqref{eq:gl} blows up in forward in finite time.
\end{enumerate}		
	\end{theorem}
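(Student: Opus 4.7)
The plan is to run a Kenig--Merle type dichotomy built on the variational characterization of $W$ as the Aubin--Talenti extremizer of the sharp Sobolev inequality, together with the $L^2$-gradient-flow structure of \eqref{eq:gl}. Rewriting the equation as $u_t = -z\,\nabla_{L^2} E(u)$ yields the dissipation identity
\[
\frac{d}{dt} E(u(t)) = -\operatorname{Re}(z)\,\bigl\|\Delta u + |u|^{\frac{4}{d-2}} u\bigr\|_{L^2}^2 \leq 0,
\]
so $E$ is monotone non-increasing along the flow. From $-\Delta W = |W|^{4/(d-2)} W$ one reads off $E(W) = \frac{1}{d}\|\nabla W\|_{L^2}^2$, and the sharp Sobolev inequality delivers the standard quantitative trap: whenever $E(u) \leq (1-\varepsilon)\,E(W)$, one has
\[
\Bigl|\,\|\nabla u\|_{L^2}^2 - \|u\|_{L^{2d/(d-2)}}^{2d/(d-2)}\,\Bigr| \geq \delta(\varepsilon)\,\|\nabla u\|_{L^2}^2,
\]
with sign determined by the side of $\|\nabla W\|_{L^2}$ on which $\|\nabla u\|_{L^2}$ lies. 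Coupled with continuity in $t$, this makes each of the sub- and super-threshold regions flow-invariant.

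In Case (1), the invariance supplies a uniform a priori $\dot H^1$ bound, so local well-posedness together with the parabolic smoothing of \eqref{eq:gl} upgrades to global existence. Energy dissipation then gives $\int_0^\infty \|u_t\|_{L^2}^2\,dt < \infty$, hence some sequence $t_n \to \infty$ along which $u_t(t_n) \to 0$ in $L^2$, i.e., $\nabla_{L^2} E(u(t_n)) \to 0$. The sequence $\{u(t_n)\}$ is therefore a Palais--Smale sequence for $E$ that is bounded in $\dot H^1$ and confined to the sub-threshold region. A profile-decomposition argument (handling the noncompactness coming from the scaling symmetry) forces each nontrivial profile to be a nonzero critical point of $E$, which has energy at least $E(W)$; as this contradicts $E(u(t_n)) < E(W)$, all profiles vanish and $u(t_n) \to 0$ in $\dot H^1$. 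Monotonicity of $E$ then upgrades this to $E(u(t)) \to 0$, and sub-threshold coercivity $E(u) \gtrsim \|\nabla u\|_{L^2}^2$ finally gives $\|u(t)\|_{\dot H^1} \to 0$.

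In Case (2), invariance gives $\|\nabla u(t)\|_{L^2} > \|\nabla W\|_{L^2}$ throughout the existence interval, and $u_0 \in H^1$ is preserved by the local flow. Setting $M(t) = \|u(t)\|_{L^2}^2$, a direct computation using \eqref{eq:gl} yields
\[
M'(t) = 2\operatorname{Re}(z)\bigl(\|u\|_{L^{2d/(d-2)}}^{2d/(d-2)} - \|\nabla u\|_{L^2}^2\bigr) = \frac{4\operatorname{Re}(z)}{d-2}\bigl(\|\nabla u\|_{L^2}^2 - d\,E(u)\bigr),
\]
and the quantitative variational gap together with $E(u(t)) \leq E(u_0) < E(W)$ bounds this from below by a strictly positive constant. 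To upgrade this linear growth into finite-time breakdown, I would run a Levine-type concavity argument: combining the Cauchy--Schwarz bound $|M'|^2 \leq 4|z|^2\,M\,\|\Delta u + |u|^{4/(d-2)}u\|_{L^2}^2$ with the dissipation identity to close a differential inequality of the form $M\,M'' \geq (1+\alpha)(M')^2$ for some $\alpha > 0$, so that $M^{-\alpha}$ is positive, concave, and with negative derivative, forcing $M$ to blow up in finite time. The local $H^1$ theory's blow-up criterion then produces the asserted finite-time breakdown.

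The main obstacle is Case (2). In the real semilinear heat setting ($\operatorname{Im} z = 0$) Levine's concavity method is classical, but the imaginary part of $z$ produces cross terms in $M''$ that are not sign-definite and must be absorbed using the strict deficit $E(u_0) < E(W)$ together with the gap $\delta(\varepsilon)$; balancing these constants to obtain the multiplier strictly larger than $1$ is the technical heart. In Case (1) the more delicate point is the Palais--Smale compactness in the scale-invariant energy-critical setting, which requires a profile decomposition carefully designed to exhaust the scaling noncompactness.
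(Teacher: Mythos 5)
First, a point of comparison: this paper does not actually prove Theorem \ref{th1.1v3} in its own text; it is imported wholesale from the authors' earlier work \cite{CGZ}, which is stated to follow the concentration-compactness road map of \cite{GR,KK11,KM}. Your overall architecture (sharp Sobolev variational trapping, monotonicity of $E$ under the $L^2$-gradient-flow structure, invariance of the sub- and super-threshold regions, and a convexity argument for blow-up) is the right skeleton and is consistent with that road map, and your algebra is correct: $E(W)=\tfrac1d\|\nabla W\|_{L^2}^2$, the dissipation identity, and the identity $M'(t)=\tfrac{4\Re z}{d-2}\bigl(\|\nabla u\|_{L^2}^2-d\,E(u)\bigr)$ (which is legitimately real because $\int \bar u(\Delta u+|u|^{4/(d-2)}u)\,dx$ is real) all check out.

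There is, however, a genuine gap in Case (1). You write that the flow-invariant a priori $\dot H^1$ bound ``together with the parabolic smoothing \dots upgrades to global existence.'' In the energy-critical setting this step is false as stated: the local existence time depends on the profile of the data, not on its $\dot H^1$ norm, and a uniformly bounded critical norm does not by itself preclude finite-time blow-up (this is exactly the type-II scenario). Ruling it out is the entire content of the Kenig--Merle/Kenig--Koch machinery invoked by \cite{GR} and \cite{CGZ}: one constructs a minimal (critical) non-global/non-decaying solution via profile decomposition and linear/nonlinear perturbation theory, shows it is almost periodic modulo symmetries, and kills it by a rigidity argument (for the dissipative equation, via backward uniqueness and the variational trap). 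Your Palais--Smale argument is deployed only \emph{after} global existence is assumed, to get decay along a sequence $t_n\to\infty$, so it does not fill this hole. In Case (2) you correctly identify the quantitatively positive lower bound on $M'$, but you leave the Levine concavity step unresolved, and it is worth noting that the naive closure fails: combining $M''\geq c\,\|u_t\|_{L^2}^2$ (from the dissipation identity) with $|M'|^2\leq 4M\|u_t\|_{L^2}^2$ produces $MM''\geq \tfrac{c}{4}(M')^2$ with a constant proportional to $(\Re z)^2/|z|^2$, which need not exceed $1$ for general $z$ with $\Re z>0$; so the multiplier ``$1+\alpha$'' you need cannot be obtained this crudely, and the absorption of the non-sign-definite cross terms you flag is not a technicality but the actual open step of your argument.
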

The result in Theorem \ref{th1.1v3} shows the classification of the solutions to \eqref{eq:gl}
when the energy of the initial data is less than $E(W)$. 

Based on the result in Theorem \ref{th1.1v3}, it is interesting to study the long time behavior of the solution to \eqref{eq:gl} when the energy of the initial data is equal to $E(W)$. In this paper, we address this problem. We now state our main result of classification.
\begin{theorem}\label{thm:main}
For $d = 3,4,$  let $u_0 \in \dot{H}^1(\mathbb{R}^d)$. Suppose that $E(u_0) = E(W)$. Then we have
  \begin{enumerate}[(A).]
    \item When $\|u_0\|_{\dot{H}^1} = \|W\|_{\dot{H}^1}$, then
    \begin{align*}
    u(t,x) = \frac1{\lambda_0^\frac{d-2}2} W \left( \frac{x- x_0}{\lambda_0} \right)
    \end{align*}
for some $\lambda_0 > 0$ and $x_0 \in \mathbb{R}^d$.
    \item When $\|u_0\|_{\dot{H}^1} < \|W\|_{\dot{H}^1}$,
    \eqref{eq:gl} admits a global solution $u$ and
\begin{align*}
\|u(t)\|_{\dot{H}^1} \to 0 \text{ as $t \to \infty$.}
\end{align*}
 Furthermore, we have
    \begin{align}\label{eq1.2v10}
\|u(t) \|_{\dot{H}^1} \lesssim ( 1+ t)^{- \gamma}, \text{ as } t \to \infty,
\end{align}
where $\gamma =  \min  \left( 1 + \frac{r^*}2 ,  \frac12  \right)$ with $r^* \in (- 2, \infty)$ is some fixed constant. 
    \item When $\|u_0\|_{\dot{H}^1} > \|W\|_{\dot{H}^1}$, $u$ blows up in finite time.
  \end{enumerate}
\end{theorem}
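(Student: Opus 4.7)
The plan is to treat the three cases separately, using the energy dissipation identity for~\eqref{eq:gl} as the common engine. A direct computation using $u_t = z\bigl(\Delta u + |u|^{\frac{4}{d-2}} u\bigr)$ yields
\begin{equation*}
\frac{d}{dt} E(u(t)) = - (\Re z) \int_{\mathbb{R}^d} \left| \Delta u + |u|^{\frac{4}{d-2}} u \right|^2 \,\mathrm{d}x \le 0,
\end{equation*}
so $E$ is non-increasing, and remains constant only along stationary solutions of $-\Delta v = |v|^{\frac{4}{d-2}} v$. Combined with the sharp Sobolev inequality, whose optimisers are exactly the phase rotations and rescalings of $W$, this identity drives the whole argument.

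For Part~(A), the constraints $E(u_0) = E(W)$ and $\|\nabla u_0\|_{L^2} = \|\nabla W\|_{L^2}$ force $u_0$ to saturate the Sobolev embedding $\|u\|_{L^{2d/(d-2)}} \le C_d\|\nabla u\|_{L^2}$. Rigidity of the Aubin--Talenti extremals together with the phase invariance of~\eqref{eq:gl} then gives $u_0(x) = e^{i\theta_0}\lambda_0^{-(d-2)/2} W\bigl((x-x_0)/\lambda_0\bigr)$ for some $\theta_0 \in \mathbb{R}$, $\lambda_0 > 0$, $x_0 \in \mathbb{R}^d$. Since $u_0$ is stationary for~\eqref{eq:gl}, pathwise uniqueness of the Cauchy problem in $\dot{H}^1$ (established in~\cite{CGZ}) gives $u(t) \equiv u_0$ for all $t \ge 0$, as asserted (after absorbing the phase).

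For Part~(B), since $\|\nabla u_0\|_{L^2} < \|\nabla W\|_{L^2}$ the initial data is not stationary, so the dissipation identity gives $E(u(t)) < E(W)$ strictly for every $t > 0$. A continuity argument traps $\|\nabla u(t)\|_{L^2} < \|\nabla W\|_{L^2}$ throughout the lifespan, because at a hypothetical first crossing $t_1$ the sharp Sobolev inequality would force $E(u(t_1)) \ge E(W)$, contradicting the strict drop. Taking any $t_0 > 0$ as a new initial time and applying Theorem~\ref{th1.1v3}(1) to $u(t_0)$ gives global existence and $\|u(t)\|_{\dot{H}^1}\to 0$. For the polynomial rate~\eqref{eq1.2v10}, once $\|u(t)\|_{\dot{H}^1}$ is small the flow is a perturbation of the dissipative semigroup $e^{tz\Delta}$; a bootstrap on weighted-in-time norms, built from the smoothing estimates for $e^{tz\Delta}$ and the critical Sobolev structure of the nonlinearity, is expected to yield $(1+t)^{-\gamma}$ with $\gamma = \min(1 + r^*/2, 1/2)$. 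The exponent $r^*$ encodes the spatial-decay rate that survives under the linearised flow, while the cap at $1/2$ reflects the standard $\dot{H}^1$-smoothing rate of the heat semigroup.

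For Part~(C), the same dissipation gives $E(u(t)) < E(W)$ strictly for $t > 0$, and a symmetric trapping argument keeps $\|\nabla u(t)\|_{L^2} > \|\nabla W\|_{L^2}$. To conclude blow-up from Theorem~\ref{th1.1v3}(2), which presupposes $H^1$ data, I would either approximate $u_0 \in \dot{H}^1$ by $H^1$-data satisfying the same strict inequalities and transfer the blow-up via continuous dependence with a uniform upper bound on the blow-up time, or run a direct virial/variance-type identity for the CGL flow. The main obstacle of the theorem is Part~(B): identifying the correct spectral exponent $r^*$ and reconciling the smoothing of the complex heat semigroup with the critical-scaling nonlinearity requires a delicate linearised analysis, whereas (A) and (C) are essentially variational/rigidity consequences of the dissipation identity.
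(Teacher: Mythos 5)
Your parts (A) and (C) follow essentially the paper's route: sharp Sobolev rigidity plus stationarity of $W$ and uniqueness for (A), and a continuity/trapping argument reducing to the sub-threshold dichotomy of Theorem \ref{th1.1v3} for (C). Your trapping step is phrased a little differently (you invoke the variational lower bound $E(f)\ge E(W)$ when $\|\nabla f\|_{L^2}=\|\nabla W\|_{L^2}$ at a first crossing, where the paper instead reduces the crossing to case (A) and derives a contradiction), but both are sound. You are in fact more careful than the paper in (C): Theorem \ref{th1.1v3}(2) is stated for $H^1$ data while the threshold theorem only assumes $\dot{H}^1$, a point the paper passes over silently; note, however, that your proposed repairs (approximation with a uniform upper bound on the blow-up time, or a virial identity for CGL) are themselves nontrivial and are only named, not carried out.

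The genuine gap is the quantitative decay \eqref{eq1.2v10} in part (B), which is a substantive half of the theorem and which your proposal only gestures at (``is expected to yield''). The paper's argument is not a perturbative bootstrap in weighted-in-time norms around $e^{tz\Delta}$; it is Schonbek's Fourier splitting method, and each of its ingredients is missing from your outline. First, $r^*$ is not a generic ``spatial-decay rate'' but the decay character of $|\nabla|u_0$ in the sense of Bjorland--Schonbek, defined through the limit of $\rho^{-2r-d}\int_{B(0,\rho)}|\xi|^2|\mathcal{F}u_0(\xi)|^2\,d\xi$ as $\rho\to0$; it governs the two-sided bound $\|e^{t\alpha\Delta}u_0\|_{\dot{H}^1}^2\sim(1+t)^{-(d/2+r^*)}$, which is what makes the low-frequency linear contribution quantifiable. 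Second, one must prove that $\|u(t)\|_{\dot{H}^1}$ is a Lyapunov function (after restarting at a late time so that the $\dot{H}^1$ norm is small), yielding the differential inequality $\frac{d}{dt}\|u(t)\|_{\dot{H}^1}^2\le -C_0\|\nabla u(t)\|_{\dot{H}^1}^2$. Third, one splits frequency space into a shrinking ball $B(t)$ of radius determined by an auxiliary function $g$ and its complement, controls the low frequencies by the linear decay plus a Duhamel term for the critical nonlinearity, and bootstraps: a first pass with $g(t)=(\ln(e+t))^3$ gives the logarithmic decay $\|u(t)\|_{\dot{H}^1}^2\lesssim(\ln(e+t))^{-2}$, which is then fed back with $g(t)=(1+t)^\alpha$ and Gronwall to reach the polynomial rate. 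In particular, the cap at $\frac12$ in $\gamma=\min(1+\frac{r^*}{2},\frac12)$ arises from the $(1+t)^{-1}$ contribution of the nonlinear Duhamel term after Gronwall, not from ``the standard $\dot{H}^1$-smoothing rate of the heat semigroup.'' Without these steps the stated exponent cannot be obtained from your sketch.
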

In this theorem, our result shows the trichotomy of the solutions of \eqref{eq:gl}
when the energy of the initial data is equal to the energy of the stationary solution.
Precisely, when the energy of the initial data is equal to the energy of $W$ for \eqref{eq:gl},
we have proven that if the kinetic energy of the initial data is less than the kinetic energy of $W$, the solution of \eqref{eq:gl} is global and dissipates to 0 as time tends to infinity; if the kinetic energy of the initial data is greater than the kinetic energy of $W$
, the solution of \eqref{eq:gl} blows up in finite time; if the kinetic energy of the initial data is equal to the kinetic energy of $W$, the solution of \eqref{eq:gl} is $W$ up to translation and scaling.
We prove Theorem \ref{thm:main} by following the argument in \cite{CGZ} together with uniqueness result of CGL. To get the decay rate \eqref{eq1.2v10}, we rely on the argument in \cite{KNP}, where the Fourier splitting method developed by M. E. Schonbek \cite{S2,S3} in studying the decay of solutions to the Navier-Stokes equations is heavily relied on. 
In a forthcoming paper, we will study the dynamics of CGL around the stationary solution $W$ by following the important work of Z. Lin and C. Zeng \cite{LZ}.

Notice that when $z \in \mathbb{R}_+$,
\eqref{eq:gl} is reduced to energy-critical semi-linear heat equation
\begin{align} \label{eq:heat}
  \left \{
  \begin{aligned}
  & v_t - \Delta v =  |v|^{\f {4}{d-2}} v
  ,\\
  & v( 0) = v_0 \in \dot{H}^1(\mathbb{R}^d).
  \end{aligned}
  \right.
\end{align}
All argument in the classification of the threshold solutions to \eqref{eq:gl} in Theorem \ref{thm:main} also trivially works for the threshold solutions
to \eqref{eq:heat}.
Thus, we have the following classification result of the threshold solutions to \eqref{eq:heat}.

\begin{corol}
Let $v_0 \in \dot{H}^1(\mathbb{R}^d)$. Suppose that $E(v_0) = E(W)$. Then we have
  \begin{enumerate}[(A).]
    \item When $\|v_0\|_{\dot{H}^1} = \|W\|_{\dot{H}^1}$, then $v (t) = W$ up to translation and scaling.
    \item When $\|v_0\|_{\dot{H}^1} < \|W\|_{\dot{H}^1}$,
    \eqref{eq:heat} admits a global solution such that $\|v(t)\|_{\dot{H}^1} \to 0$ as $t \to \infty$.
    \item When $\|v_0\|_{\dot{H}^1} > \|W\|_{\dot{H}^1}$, $v$ blows up in finite time.
  \end{enumerate}
\end{corol}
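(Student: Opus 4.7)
The plan is immediate: the Corollary is nothing but the specialization of Theorem~\ref{thm:main} to $z = 1 \in \mathbb{R}_+$. Since $\Re(1) = 1 > 0$, the hypothesis $\Re z > 0$ of \eqref{eq:gl} is satisfied, and setting $z = 1$ transforms \eqref{eq:gl} literally into the heat equation \eqref{eq:heat}. Hence each case of the Corollary is obtained verbatim from the corresponding case of Theorem~\ref{thm:main} applied to the solution $v$.

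Concretely, I would first record this identification of the two Cauchy problems. Case~(A) then follows from case~(A) of Theorem~\ref{thm:main}, which gives $v(t,x) = \lambda_0^{-(d-2)/2} W((x-x_0)/\lambda_0)$ for some $\lambda_0 > 0$ and $x_0 \in \mathbb{R}^d$; by definition this is $W$ up to translation and scaling. Case~(B) is the qualitative part of case~(B) of Theorem~\ref{thm:main} (the quantitative decay rate \eqref{eq1.2v10} is not required here, but comes for free). Case~(C) is case~(C) of Theorem~\ref{thm:main}.

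The only point worth pausing over is whether some step in the proof of Theorem~\ref{thm:main} might exploit the complex phase of $z$ in an essential way. A quick audit shows it does not: the energy-dissipation identity $\tfrac{d}{dt} E(u) = -\Re z \int \bigl|\Delta u + |u|^{4/(d-2)} u\bigr|^2 \,\mathrm{d}x$, the variational characterization of $W$, the modulation analysis at the threshold, and the Fourier splitting argument imported from \cite{KNP,S2,S3} all rely solely on the parabolic condition $\Re z > 0$, which is preserved at $z = 1$. Consequently there is no genuine obstacle, and the Corollary is a direct consequence of Theorem~\ref{thm:main}.
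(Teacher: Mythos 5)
Your proposal is correct and matches the paper's own route: the paper simply observes that for $z \in \mathbb{R}_+$ equation \eqref{eq:gl} reduces to \eqref{eq:heat} and that every argument in the proof of Theorem \ref{thm:main} carries over, which is exactly your specialization to $z=1$. Your brief audit that no step exploits the imaginary part of $z$ is a reasonable (and slightly more explicit) way of saying what the paper asserts with ``all argument \dots also trivially works.''
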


\section{Proof of the main result}
In this section, we give the proof of Theorem \ref{thm:main}. As a preliminary before giving the proof, we present some useful results throughout the proof.
\begin{lemma}[Sharp Sobolev inequality]\label{le2.1v11}
For any function $f \in \dot{H}^1_x$, 
 we have the Sobolev inequality
 \begin{align}\label{eq2.23new}
 \|f \|_{L_x^{\f{2d}{d - 2}}} \le C_d \|f \|_{\dot{H}_x^1}.
 \end{align}
The equality holds if and only if
 \begin{align}
   f(x)  = \frac{1}{\lambda_0^{ \frac{d-2}2}}W \left( \f{x - x_0}{\lambda_0}\right), \ x_0 \in \mathbb{R}^d, \ \lambda_0 > 0.
 \end{align}
\end{lemma}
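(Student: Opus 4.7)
The plan is to follow the classical Aubin--Talenti strategy. The inequality \eqref{eq2.23new} with some finite constant $C_d$ is a standard Sobolev embedding, obtained e.g.\ by writing $f = (-\Delta)^{-1/2} g$ and applying the Hardy--Littlewood--Sobolev inequality, so the remaining task is to identify the sharp constant and its extremizers. I would work with the variational problem
\begin{align*}
C_d^{-1} := \inf \left\{ \|f\|_{\dot H^1_x} : f \in \dot H^1_x, \ \|f\|_{L^{2d/(d-2)}_x} = 1 \right\},
\end{align*}
noting that the Sobolev quotient is invariant under translations and under the scaling $f \mapsto \lambda^{(d-2)/2} f(\lambda \cdot)$, is not increased by replacing $f$ with $|f|$, and is not increased by symmetric decreasing rearrangement via the P\'olya--Szeg\H{o} inequality.

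To show the infimum is attained, I would take a minimizing sequence $\{f_n\}$, replace each $f_n$ by its symmetric decreasing rearrangement $f_n^\ast$ so that $f_n^\ast \ge 0$ is radial and nonincreasing, and kill the scaling noncompactness by a normalization such as $f_n^\ast(0) = 1$ after a suitable dilation. A standard concentration--compactness argument in $\dot H^1$---with vanishing excluded by the normalization and dichotomy excluded by strict subadditivity of the infimum---extracts a subsequence converging weakly and a.e.\ to a nontrivial radial, nonincreasing, nonnegative $W$ which is a minimizer. The associated Euler--Lagrange equation reads $-\Delta W = \mu W^{(d+2)/(d-2)}$ for some $\mu > 0$, which a further rescaling normalizes to $\mu = 1$.

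It remains to classify positive radial solutions of $-\Delta W = W^{(d+2)/(d-2)}$ on $\mathbb{R}^d$ which decay at infinity. Setting $W(x) = w(|x|)$ and performing the Emden--Fowler substitution $w(r) = r^{-(d-2)/2} v(\log r)$ yields an autonomous ODE carrying a conserved Hamiltonian; the trajectories corresponding to regular and decaying solutions integrate explicitly to the one-parameter family
\begin{align*}
w(r) = \frac{a}{\left( 1 + \frac{a^{4/(d-2)} r^2}{d(d-2)} \right)^{(d-2)/2}}, \qquad a > 0,
\end{align*}
which together with the translation and scaling freedom gives the claimed form $\lambda_0^{-(d-2)/2} W((x - x_0)/\lambda_0)$. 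The main obstacle is the compactness step: the symmetry group of the Sobolev quotient is noncompact both in translations and in scalings, so one must first rearrange, then freeze the scale, then run the concentration--compactness machinery to prevent the minimizing sequence from escaping to zero or to a delta. In practice one cites the results of Aubin and Talenti directly, since the statement is by now completely classical.
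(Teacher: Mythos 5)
The paper gives no proof of this lemma at all; it is invoked as the classical Aubin--Talenti theorem, so there is no in-paper argument to compare yours against. Your sketch follows the standard route (rearrangement, concentration--compactness, Emden--Fowler analysis of the radial Euler--Lagrange ODE) and is sound for establishing the inequality with the sharp constant and for producing \emph{an} extremizer of the stated form.

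There is, however, a genuine gap in the ``only if'' direction, which is exactly the direction the paper actually uses in Theorem \ref{th2.3v11}. Your argument classifies the radial, nonincreasing, nonnegative minimizers, but the lemma asserts that \emph{every} $f$ achieving equality is a translate--dilate of $W$. To pass from one to the other you must control the equality cases of the two reductions you perform: equality in $\|\nabla |f|\|_{L^2}\le \|\nabla f\|_{L^2}$ (which, for the complex-valued $f$ relevant here, forces $f=e^{i\theta}|f|$ with constant phase), and equality in the P\'olya--Szeg\H{o} inequality, which does \emph{not} by itself force $f$ to be a translate of $f^\ast$ unless one rules out plateaus of the level sets (this is the Brothers--Ziemer theorem, and it is delicate). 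The cleaner way to close the gap is to avoid rearrangement in the uniqueness step altogether: any extremizer satisfies $-\Delta f=\mu |f|^{4/(d-2)}f$ weakly, elliptic regularity and the strong maximum principle reduce to a positive solution after removing a constant phase, and the Caffarelli--Gidas--Spruck classification of all positive finite-energy solutions (no radial symmetry assumed) gives the Aubin--Talenti family. Finally, a small point of bookkeeping: by homogeneity of the Sobolev quotient, any nonzero constant multiple of $W$ is also an extremizer, so the equality case as stated (and as you reproduce it) should carry an arbitrary constant prefactor, reduced to a unimodular one only after the additional normalization $\|f\|_{\dot H^1}=\|W\|_{\dot H^1}$ imposed in case (A).
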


One important ingredient to prove Theorem \ref{thm:main} is the following uniqueness result of strong solutions.
\begin{lemma}[Unconditional uniqueness]\label{le2.3v3}
Let $u$ and $v$ be two solutions to \eqref{eq:gl} with the same initial data $u_0$, and $v$ is bounded in $L_t^\infty L_x^\frac{4d}{d-2}$, then we have $u= v.$
\end{lemma}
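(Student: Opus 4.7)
The plan is to set $w := u-v$ and run a short-time $L^2$ energy estimate; Gronwall combined with $w(0)=0$ will then force $w\equiv 0$, after which iteration covers the full common lifespan. The equation for $w$ reads $w_t - z\Delta w = z(|u|^{4/(d-2)}u - |v|^{4/(d-2)}v)$ with $w(0)=0$. A first technical point is to check that $w(t)\in L^2_x$, since a priori only $w\in L^\infty_t\dot H^1$ is immediate from the two solutions being strong. Because $\Re z>0$ the semigroup $e^{zt\Delta}$ satisfies the heat-type bound $\|e^{zt\Delta}f\|_{L^2}\lesssim t^{-1/2}\|f\|_{L^{2d/(d+2)}}$, and combining H\"older with the pointwise estimate $\bigl||u|^{4/(d-2)}u-|v|^{4/(d-2)}v\bigr|\lesssim(|u|^{4/(d-2)}+|v|^{4/(d-2)})|w|$ in the Duhamel formula yields $\|w(t)\|_{L^2}\lesssim\sqrt{t}$.

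Pairing the equation with $\bar w$ and taking real parts gives
\begin{equation*}
\tfrac12\tfrac{d}{dt}\|w\|_{L^2}^2 + \Re z\,\|\nabla w\|_{L^2}^2 = \Re\!\int z\bar w\bigl(|u|^{4/(d-2)}u - |v|^{4/(d-2)}v\bigr)\,dx,
\end{equation*}
which is bounded in absolute value by $|z|\int(|v|^{4/(d-2)}+|w|^{4/(d-2)})|w|^2\,dx$ after using $|u|^{4/(d-2)}\lesssim|v|^{4/(d-2)}+|w|^{4/(d-2)}$. The $v$-contribution is where the hypothesis enters: $v\in L^{4d/(d-2)}$ gives $|v|^{4/(d-2)}\in L^d$, and H\"older together with the Gagliardo--Nirenberg interpolation $\|w\|_{L^{2d/(d-1)}}^2\le C\|w\|_{L^2}\|\nabla w\|_{L^2}$ and Young's inequality produce a bound of the form $\tfrac{\Re z}{2}\|\nabla w\|_{L^2}^2 + C_z\|v\|_{L^\infty_tL^{4d/(d-2)}_x}^{8/(d-2)}\|w\|_{L^2}^2$. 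The $w$-contribution is controlled by Sobolev embedding: $\int|w|^{2d/(d-2)}\le C\|w\|_{L^{2d/(d-2)}}^{4/(d-2)}\|\nabla w\|_{L^2}^2$.

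The crux is absorbing this last term: its prefactor $\|w\|_{L^{2d/(d-2)}}^{4/(d-2)}$ is a priori only bounded, not small. Here I would exploit that as strong solutions both $u,v\in C_t\dot H^1$, so $w\in C_t\dot H^1$ with $w(0)=0$; Sobolev then gives $\|w(t)\|_{L^{2d/(d-2)}}\to 0$ as $t\to 0^+$, providing a short interval $[0,T_1]$ on which $C|z|\|w(t)\|_{L^{2d/(d-2)}}^{4/(d-2)}\le\tfrac14\Re z$ and absorption succeeds. Gronwall applied to the resulting differential inequality forces $w\equiv 0$ on $[0,T_1]$; the same reasoning restarted at $T_1$ (where again $w(T_1)=0$) iterates to exhaust the common interval of existence. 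The asymmetric hypothesis $v\in L^\infty_tL^{4d/(d-2)}_x$ is precisely what keeps the $v$-contribution subcritical relative to $\|\nabla w\|_{L^2}^2$; no analogous assumption on $u$ is needed, since the $u$-part is tamed by the continuity-induced smallness of $\|w\|_{L^{2d/(d-2)}}^{4/(d-2)}$ on short intervals.
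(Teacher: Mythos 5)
Your proof is correct, but it takes a genuinely different route from the paper's. The paper runs a Duhamel--Strichartz contraction in the scale-critical space $L^2_t L^{\frac{2d}{d-2}}_x$: it splits the nonlinear difference into $O\bigl(|w|^{\frac{d+2}{d-2}}\bigr)+O\bigl(|v|^{\frac{4}{d-2}}w\bigr)$, makes \emph{both} coefficients small on a short interval --- the first via continuity of $w$ in $\dot H^1$ with $w(0)=0$ (exactly as you do), the second by noting that $v\in L^\infty_t L^{\frac{4d}{d-2}}_x$ forces $\|v\|_{L^{8/(d-2)}_t L^{4d/(d-2)}_x(I)}\to 0$ as $|I|\to 0$ --- then absorbs and continues. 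You instead run an $L^2$ energy estimate plus Gronwall. This buys you something: the $v$-cross-term need not be small at all, since after Gagliardo--Nirenberg and Young it only contributes a bounded Gronwall coefficient $C_z\|v\|_{L^\infty_t L^{4d/(d-2)}_x}^{8/(d-2)}$; only the self-interaction term requires the short-time smallness of $\|w\|_{L^{2d/(d-2)}}$. The price is the preliminary step $w(t)\in L^2_x$, which is not free since $u_0\in\dot H^1$ only; your derivation of $\|w(t)\|_{L^2}\lesssim\sqrt t$ from the $(t-s)^{-1/2}$ smoothing of $e^{z(t-s)\Delta}$ (valid for $\Re z>0$, with constants depending on $\arg z$) is correct, but note this gain of integrability is a genuinely parabolic effect --- the paper's argument, staying entirely in $\dot H^1$-based critical norms, is the one that would survive in the dispersive ($z\in i\mathbb{R}$) limit. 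Two minor points you should make explicit in a final write-up: the justification of the energy identity (pairing with $\bar w$ and integrating by parts) uses the strong-solution regularity $\Delta u,\Delta v\in L^2_{t,x}$ locally, which the paper has from its well-posedness theory; and the closing iteration is cleaner phrased as an open-closed (connectedness) argument on $\{t: w(t)=0\}$, since the length of each absorption interval is not a priori uniform.
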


\begin{proof}
Let $w = u - v$, we have
 \begin{align}
 \begin{aligned}
     w &= z \int_0^t e^{z(t-s)\Delta}  \left( |v (s) +w(s)|^{\frac{4}{d-2}}(v +w)(s) -  |v |^{\frac{4}{d-2}} v  \right)\, \mathrm{d}s \\
     &= z \int_0^t e^{z(t-s)\Delta}
     \left( O \left(|w(s)|^{\frac{d+2}{d-2}} \right) +   O \left( | v |^{\frac{4}{d-2}} w(s) \right) \right)\, \mathrm{d}s.
\end{aligned}
 \end{align}
 Then by the Strichartz estimates,
 \begin{align}\label{est:w}
 \begin{aligned}
     \| w\|_{L^2_t L^{\frac{2d}{d-2}}_x}
     &\lesssim
 \left\||w|^{\frac{d+2}{d-2}} \right\|_{L^2_t L^{\frac{2d}{d+2}}_x} + \left\|| v |^{\frac{4}{d-2}} |w| \right\|_{L^1_t L^2_x}\\
     &\lesssim \|w\|_{L^{\frac{2(d+2)}{d-2}}_tL^{\frac{2d}{d-2}}_x}^{\frac{d+2}{d-2}} + \| v \|_{L^{\frac{8}{d-2}}_tL^{\frac{4d}{d-2}}_x}^{\frac{4}{d-2}} \|w\|_{L^2_t L^{\frac{2d}{d-2}}_x}\\
     &\lesssim \|w\|_{L^\infty_t L^{\frac{2d}{d-2}}}^{\frac{4}{d-2}}\|w\|_{L^2_tL^{\frac{2d}{d-2}}_x} + \| v \|_{L^{\frac{8}{d-2}}_tL^{\frac{4d}{d-2}}_x}^{\frac{4}{d-2}}  \|w\|_{L^2_t L^{\frac{2d}{d-2}}_x}.
 \end{aligned}
 \end{align}
Because $\|v \|_{L_t^\infty L^{\frac{4d}{d-2}}_x}$ is finite, then by choosing the time interval $I$ so much small that for any $\varepsilon>0$,
\begin{align}
   \|v \|_{L^{\frac{8}{d-2}}_t L^{\frac{4d}{d-2}}_x(I \times \mathbb{R}^d)} < \varepsilon.
\end{align}
Since $w \in C^0_t\dot{H}^1_x(I \times \mathbb{R}^d) \subseteq C^0_t L_x^{\frac{2d}{d-2}}(I \times \mathbb{R}^d)$, we might restrict $I$ to be smaller so that
\begin{align}
    \|w\|_{L^\infty_t L_x^{\frac{2d}{d-2}}(I \times \mathbb{R}^d)} < \varepsilon.
\end{align}
Then the estimate \eqref{est:w} is reduced to
\begin{align}
 \| w\|_{L^2_t L^{\frac{2d}{d-2}}_x(I \times \mathbb{R}^d)} \lesssim \varepsilon^{\frac{4}{d-2}}  \|w\|_{L^2_t L^{\frac{2d}{d-2}}_x},
\end{align}
that implies $u(t, x) = v(t,x) $ in $I \times \mathbb{R}^d$. Then by a standard continuity argument,
$u(t, x) = v(t,x) $ in $\mathbb{R} \times \mathbb{R}^d$.
\end{proof}

\subsection{The case (A)}

Now we present the proof of main result $(A)$ in  Theorem \ref{thm:main}.
 \begin{theorem}\label{th2.3v11}
     Suppose that
\begin{align}\label{eq2.7v3}
         E(u_0)= E(W) \text{ and }       \|u_0\|_{\dot{H}^1} = \|W\|_{\dot{H}^1}.
        \end{align}
        Then for any $t \in \mathbb{R}_+$, we have
\begin{align}
   u( t, x ) = \frac{1}{\lambda_0^{ \frac{d-2}2}}W \left( \f{x - x_0}{\lambda_0}\right) \ \text{for some}\ x_0 \in \mathbb{R}^d \ \text{and}\ \lambda_0 > 0.
 \end{align}
 \end{theorem}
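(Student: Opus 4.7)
The plan is to reduce the theorem to two essentially algebraic steps: first, the hypothesis already forces $u_0$ itself to be an extremiser of the sharp Sobolev inequality, so by Lemma \ref{le2.1v11} it must be a translated and rescaled copy of $W$; second, any such function is a stationary solution of \eqref{eq:gl}, and unconditional uniqueness (Lemma \ref{le2.3v3}) then pins the evolution to this stationary profile for all positive times. No decay analysis or spectral information is needed in this case.

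For the first step, I would substitute $E(u_0)=E(W)$ and $\|u_0\|_{\dot{H}^1}=\|W\|_{\dot{H}^1}$ into the explicit formula
\begin{align*}
E(u)=\tfrac12\|u\|_{\dot{H}^1}^2-\tfrac{d-2}{2d}\|u\|_{L^{2d/(d-2)}}^{2d/(d-2)}
\end{align*}
and cancel the kinetic terms to obtain $\|u_0\|_{L^{2d/(d-2)}}=\|W\|_{L^{2d/(d-2)}}$. Because $W$ saturates Sobolev, $\|W\|_{L^{2d/(d-2)}}=C_d\|W\|_{\dot{H}^1}$, so we arrive at $\|u_0\|_{L^{2d/(d-2)}}=C_d\|u_0\|_{\dot{H}^1}$, i.e.\ equality in \eqref{eq2.23new}. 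The equality clause of Lemma \ref{le2.1v11} then identifies $u_0=\lambda_0^{-(d-2)/2}W\bigl((x-x_0)/\lambda_0\bigr)$ for some $x_0\in\mathbb{R}^d$ and $\lambda_0>0$.

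For the second step, set $v(t,x):=\lambda_0^{-(d-2)/2}W\bigl((x-x_0)/\lambda_0\bigr)$, which is independent of $t$. Because $-\Delta W=|W|^{4/(d-2)}W$, the identity $v_t-z\Delta v=z|v|^{4/(d-2)}v$ reduces to $0=0$, so $v$ is a (time-independent) solution of \eqref{eq:gl} with the same initial datum as $u$. Moreover $v\in L^\infty_tL^{4d/(d-2)}_x$, since the pointwise decay $W(x)\lesssim|x|^{-(d-2)}$ gives $W\in L^p(\mathbb{R}^d)$ for every $p>d/(d-2)$ and $4d/(d-2)>d/(d-2)$. Applying Lemma \ref{le2.3v3} then yields $u\equiv v$ on $\mathbb{R}_+\times\mathbb{R}^d$, which is precisely the claim.

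There is no real obstacle here---the entire content is the variational remark that the threshold hypothesis already forces the initial datum to be a ground state, after which uniqueness closes everything. The only small subtlety concerns the phase in the complex equality case of Sobolev: strictly speaking one first obtains $u_0=e^{i\theta_0}\lambda_0^{-(d-2)/2}W\bigl((x-x_0)/\lambda_0\bigr)$ for some $\theta_0\in\mathbb{R}$, but this is harmless because \eqref{eq:gl} is invariant under the rotation $u\mapsto e^{i\theta_0}u$, so $e^{i\theta_0}v$ is again a stationary solution and the uniqueness argument is unaffected; the statement as written in Theorem \ref{th2.3v11} absorbs this phase into the normal form for $W$.
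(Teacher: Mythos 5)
Your proposal is correct and follows essentially the same route as the paper: cancel the kinetic terms in the energy to force equality in the sharp Sobolev inequality, identify $u_0$ via Lemma \ref{le2.1v11}, and then invoke the unconditional uniqueness of Lemma \ref{le2.3v3} against the stationary solution. You are in fact slightly more careful than the paper, since you verify the hypothesis $v\in L^\infty_t L^{4d/(d-2)}_x$ needed for Lemma \ref{le2.3v3} and flag the complex phase $e^{i\theta_0}$ in the equality case, which the paper's proof passes over silently.
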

 \begin{proof}
 By \eqref{eq2.7v3}, we have
 \begin{align}
   \|u_0\|_{L_x^{\f{2d}{d-2}}} = \|W\|_{L_x^{\f{2d}{d-2}}} = C_d \|W\|_{\dot{H}_x^1} = C_d \|u_0\|_{\dot{H}_x^1},
 \end{align}
 where $C_d$ is the best constant of the Sobolev inequality \eqref{eq2.23new}. Hence
 \begin{align}
   \|u_0\|_{L_x^{\f{2d}{d-2}}} = C_d \|u_0\|_{\dot{H}_x^1}.
 \end{align}
 Then by Lemma \ref{le2.1v11}, we have
 \begin{align}
   u_0(x)  = \frac{1}{\lambda_0^{ \frac{d-2}2}}W \left( \f{x - x_0}{\lambda_0}\right), \ x_0 \in \mathbb{R}^d, \ \lambda_0 > 0.
 \end{align}
 We can see that $\frac1{\lambda_0^\frac{d-2}2} W \left( \frac{x- x_0 }{ \lambda_0} \right)$ is a $\dot{H}^1$ solution to \eqref{eq:gl}, then by Lemma \ref{le2.3v3}, we have
 \begin{align*}
     u(t,x) = \frac1{\lambda_0^\frac{d-2}2 } W \left( \frac{x- x_0}{\lambda_0} \right), \text{ for any } t \in \mathbb{R}_+ .
 \end{align*}

 \end{proof}

\subsection{The case (B)}
In this subsection, we consider dynamics of the solution to \eqref{eq:gl} when the kinetic energy of the initial data is less than the kinetic energy of $W$.

We now show the proof of part $(B)$ in our main theorem.

\begin{theorem}\label{th2.6new}
  Suppose that
  \begin{align}\label{eq2.20v5}
E(u_0) =  E(W),\ \|u_0\|_{\dot{H}^1} < \|W\|_{\dot{H}^1}.
\end{align}
Then the solution $u$ with initial data $u_0$ is global. Furthermore, we have
  \begin{align}\label{eq2.14v11}
    \|u\|_{L_{t,x}^\frac{2(d+2)}{d-2} ( \mathbb{R}_+ \times \mathbb{R}^d )} < \infty, 
    \text{ and } \
    \|u(t)\|_{\dot{H}^1} \to 0\ \text{as}\ t\to \infty.
  \end{align}
\end{theorem}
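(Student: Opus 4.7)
The plan is to leverage the dissipative nature of CGL ($\Re z > 0$) to reduce the threshold case to the sub-threshold regime already treated in Theorem~\ref{th1.1v3}. The key identity is
\[
\frac{d}{dt} E(u(t)) \,=\, -\,\Re z \int \Bigl| \Delta u + |u|^{\frac{4}{d-2}} u \Bigr|^2 dx \,\leq\, 0,
\]
obtained by differentiating the energy along the equation $u_t = z(\Delta u + |u|^{4/(d-2)} u)$; this computation is rigorous for $t>0$ thanks to the parabolic smoothing of $e^{zt\Delta}$. First I invoke the local well-posedness theory to produce a maximal-lifespan solution $u \in C^0_t\dot{H}^1([0,T_{\max}) \times \mathbb{R}^d)$ and record the consequence $E(u(t)) \leq E(u_0) = E(W)$ throughout.

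Next I establish energy trapping: $\|u(t)\|_{\dot{H}^1} < \|W\|_{\dot{H}^1}$ on $[0,T_{\max})$. Setting $f(x) := \tfrac12 x^2 - \tfrac{d-2}{2d} C_d^{2d/(d-2)} x^{2d/(d-2)}$, Lemma~\ref{le2.1v11} gives $E(u(t)) \geq f(\|u(t)\|_{\dot{H}^1})$, and a direct computation shows that $f$ attains its maximum value $E(W)$ precisely at $x = \|W\|_{\dot{H}^1}$. If $\|u(t_1)\|_{\dot{H}^1}$ first equalled $\|W\|_{\dot{H}^1}$ at some $t_1 > 0$, then $E(u(t_1)) \geq f(\|W\|_{\dot{H}^1}) = E(W)$, which together with the monotonicity $E(u(t_1))\leq E(u_0)=E(W)$ would force $E(u(t)) \equiv E(W)$ on $[0,t_1]$. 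The dissipation identity then yields $\Delta u + |u|^{4/(d-2)} u \equiv 0$, hence $u_t \equiv 0$, hence $u_0 = u(t_1)$ with $\|u_0\|_{\dot{H}^1} = \|W\|_{\dot{H}^1}$, contradicting the hypothesis.

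Next I produce $t_0 \in (0,T_{\max})$ with $E(u(t_0)) < E(W)$. Otherwise the dissipation identity again forces $u$ to be stationary, $\Delta u_0 + |u_0|^{4/(d-2)} u_0 = 0$; testing against $u_0$ gives the Pohozaev-type identity $E(u_0) = \tfrac1d \|u_0\|_{\dot{H}^1}^2$, which combined with $E(u_0) = E(W) = \tfrac1d \|W\|_{\dot{H}^1}^2$ again yields $\|u_0\|_{\dot{H}^1} = \|W\|_{\dot{H}^1}$, contradicting the hypothesis. At this time $t_0$ the solution therefore lies strictly below both the energy and kinetic thresholds, so Theorem~\ref{th1.1v3}(1) applied with initial time $t_0$ provides global existence on $[t_0,\infty)$ together with $\|u(t)\|_{\dot{H}^1} \to 0$; concatenation with the local bound on $[0,t_0]$ then gives the $\dot{H}^1$ conclusions of the theorem.

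The main obstacle I anticipate is the finite global Strichartz bound $\|u\|_{L^{2(d+2)/(d-2)}_{t,x}(\mathbb{R}_+ \times \mathbb{R}^d)} < \infty$, since Theorem~\ref{th1.1v3} as stated only records dissipation of the $\dot{H}^1$ norm. Extracting finite scattering requires revisiting the concentration-compactness/Kenig-Merle analysis of [CGZ]: once $E(u(t_0)) < E(W)$ and $\|u(t_0)\|_{\dot{H}^1} < \|W\|_{\dot{H}^1}$ are in hand, the critical-element rigidity developed there rules out a minimal blow-up solution on $[t_0,\infty)$ and delivers the finite Strichartz norm, which is then combined with the local estimate on $[0,t_0]$.
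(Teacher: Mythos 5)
Your proposal is correct and follows essentially the same route as the paper: establish the trapping $\|u(t)\|_{\dot{H}^1} < \|W\|_{\dot{H}^1}$ on the lifespan, show the energy must drop strictly below $E(W)$ at some time (otherwise the dissipation identity forces a stationary solution, which the variational/Pohozaev characterization rules out since $\|u_0\|_{\dot{H}^1} < \|W\|_{\dot{H}^1}$), and then invoke the sub-threshold result of \cite{CGZ} from that time onward to get globality, the Strichartz bound, and the decay of the $\dot{H}^1$ norm. Your explicit use of the function $f$ and of the Pohozaev identity simply makes precise the ``variational argument'' that the paper leaves implicit.
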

\begin{proof}

Let $u$ be the maximal lifespan solution with lifespan $[0, T_*)$. First, we claim the solution $u$ satisfies
\begin{align}\label{eq2.21v5}
\|u(t) \|_{\dot{H}^1} < \|W\|_{\dot{H}^1 },\ \forall\, t \in (0, T_*).
\end{align}
Indeed, if not, there is $t_1 \in (0, T_*)$ such that
$\|u(t_1) \|_{\dot{H}^1} = \|W\|_{\dot{H}^1}$
and
$E(u(t_1)) \le E(u_0) = E(W)$. We can see that when
\begin{align*}
E(u(t_1)) < E(W),\
\|u(t_1) \|_{\dot{H}^1} = \|W\|_{\dot{H}^1},
\end{align*}
 this case cannot happen by the variational argument.
Thus we have
\begin{align*}
E(u(t_1)) = E(W),\
\|u(t_1) \|_{\dot{H^1}} = \|W\|_{\dot{H}^1}.
\end{align*}
Then by Theorem \ref{th2.3v11} and Lemma \ref{le2.3v3}, we see
\begin{align*}
u(t,x) = W(x) \ \text{ up to scaling and translation.}
\end{align*}
But this contradicts the fact that
\begin{align*}
E(u_0) = E(W),\ \|u_0 \|_{\dot{H}^1} < \|W\|_{\dot{H}^1 }.
\end{align*}
Thus, we have \eqref{eq2.21v5}.

We claim that there exists $t_0 \in (0, T_*) $, such that $E(u(t_0)) < E(W)$ and $E(u(t_0)) \le (1-\delta)E(W)$ for some $\delta \in (0, 1)$.
Otherwise,  $E(u(t)) = E(W)$ for any $t \in (0, T_*)$. From the energy equality in \cite{CGZ}, we have
  \begin{align}
    \f{d}{dt} E(u(t)) = - \Re z \int_{\mathbb{R}^d} |u_t|^2 = 0.
  \end{align}
  We could see that $u_t \equiv 0$ for $t \in [0, T_*)$, so that $u \in \dot{H}^1$ is stationary and satisfies $- \Delta u = |u|^{\f4{d-2}}u $.
  By Lemma \ref{le2.3v3}, $u(t,x)  = W(x) $, for $t \in [0, T_* )$. But this contradicts the assumption $\|u_0\|_{\dot{H}^1} < \|W\|_{\dot{H}^1}$.

By $\|u(t) \|_{\dot{H}^1} < \|W\|_{\dot{H}^1}, $
 $\forall\, t \in [0, T_*), $ we have there exists $t_2 \in [0, T_*)$ such that
 \begin{align*}
 E(u(t_2)) <E(W) \text{ and }
 \|u(t_2) \|_{\dot{H}^1 } < \|W\|_{\dot{H}^1}.
 \end{align*}
 Then by the result in \cite{CGZ}, we have $T_* = \infty$, and $u$ satisfies \eqref{eq2.14v11}.

 \end{proof}

We now turn to show the decay rate \eqref{eq1.2v10}. By \eqref{eq2.14v11}, we see for $t$ large enough, $\|u(t)\|_{\dot{H}^1}$ is small enough. Thus, without loss of generality, we may assume the $\dot{H}^1$-norm of the initial data $u_0$ to \eqref{eq:gl} is small enough.
\begin{theorem}\label{th2.5v13}
Let $u_0 \in \dot{H} ^1 (\mathbb{R}^d )$ with $r^{\ast} = r^{\ast} \left( |\nabla |  u_0\right) > -2$, and $\Vert u_0 \Vert _{\dot{H}^1}$ small enough.  Then for the global solution $u$ to \eqref{eq:gl} we have

\begin{equation}
\label{eq:main-estimate}
\Vert u (t) \Vert_{\dot{H} ^1} \leq C (1+t) ^{- \frac12 \min \left(   2 + r^{\ast} , 1 \right) }.
\end{equation}
\end{theorem}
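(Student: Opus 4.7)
The plan is to follow the Fourier splitting method of M.~E.~Schonbek, in the form adapted in \cite{KNP} for dissipative equations with a low-frequency--sensitive source. The strategy is to derive a scalar differential inequality for $E(t) := \|u(t)\|_{\dot{H}^1}^2$ in which the linear dissipation drives the decay, the low-frequency structure of the data is encoded by the decay character $r^* = r^*(|\nabla|u_0)$, and the nonlinear contributions are absorbed using the smallness of the initial $\dot{H}^1$ norm.

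First, I would differentiate the energy: pairing the equation with $-\Delta \bar u$ and taking real parts yields
\begin{equation*}
  \tfrac12 \tfrac{d}{dt} E(t) = - \Re(z) \|\Delta u\|_{L^2}^2 - \Re \int z\, |u|^{\frac4{d-2}} u \, \overline{\Delta u}\, dx.
\end{equation*}
Integrating by parts turns the nonlinear integral into $\int |u|^{\frac4{d-2}} |\nabla u|^2\, dx$ (up to constants), which Hölder plus Sobolev bounds by $\lesssim \|u\|_{\dot H^1}^{4/(d-2)} \|\Delta u\|_{L^2}^2$. Because $\|u(t)\|_{\dot{H}^1}$ is small (propagated by a bootstrap after possibly shrinking the data), this term is absorbed into the dissipation and one obtains $\frac{d}{dt} E(t) \leq -c\|\Delta u\|_{L^2}^2$.

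Next, I would Fourier-split at a scale $\rho(t)$ to be chosen: using $|\xi|^4 \geq \rho(t)^2 |\xi|^2$ on $\{|\xi| > \rho(t)\}$,
\begin{equation*}
  \|\Delta u\|_{L^2}^2 \geq \rho(t)^2 E(t) - \rho(t)^2 \int_{|\xi| \leq \rho(t)} |\xi|^2 |\hat u(t,\xi)|^2\, d\xi,
\end{equation*}
and control the low-frequency integral via the Duhamel representation
\begin{equation*}
  \hat u(t,\xi) = e^{-z|\xi|^2 t} \hat u_0(\xi) + z \int_0^t e^{-z|\xi|^2 (t-s)} \widehat{|u|^{\frac4{d-2}} u}(s,\xi)\, ds.
\end{equation*}
The linear piece contributes $\int_{|\xi| \leq \rho} |\xi|^2 |\hat u_0|^2\, d\xi \lesssim \rho^{d+2+2r^*}$ directly from the definition of the decay character of $|\nabla|u_0$. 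For the nonlinear piece one uses $|\widehat{|u|^{4/(d-2)} u}(\xi)| \lesssim \||u|^{4/(d-2)} u\|_{L^1_x}$ for small $\xi$, together with the global Strichartz bound from Theorem~\ref{th2.6new} and the smallness bootstrap, to show that its contribution is of lower order than the linear one.

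Combining the above gives the schematic differential inequality
\begin{equation*}
  \tfrac{d}{dt} E(t) + c \rho(t)^2 E(t) \leq C \rho(t)^{d+4+2r^*} + \text{(nonlinear, lower order)}.
\end{equation*}
Choosing $\rho(t)^2 = \alpha/(1+t)$ with $\alpha$ sufficiently large and integrating via the factor $(1+t)^{c\alpha}$ produces $E(t) \lesssim (1+t)^{-\min(2+r^*,1)}$; the cap at $1$ reflects the fact that the nonlinear source has its own temporal profile, which saturates the achievable rate whenever $2+r^* > 1$. Taking square roots yields \eqref{eq:main-estimate}. The main technical obstacle is the nonlinear low-frequency bookkeeping: one must verify that $\widehat{|u|^{4/(d-2)} u}(\xi)$ inherits enough low-frequency smallness to preserve the decay character of the data through the Duhamel iteration, which is precisely where the global a priori control from Theorem~\ref{th2.6new}, together with the smallness hypothesis, enters the argument decisively.
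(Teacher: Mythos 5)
Your proposal follows the same route as the paper: establish that $\|u(t)\|_{\dot H^1}$ is non-increasing by absorbing the nonlinearity into the dissipation via smallness, Fourier-split at a time-dependent radius $\rho(t)$, control the low frequencies through Duhamel with the linear part estimated by the decay character of $|\nabla|u_0$, and close with a bootstrap. The differential-inequality/integrating-factor formulation you use is equivalent to the paper's multiplication by $g(t)$. However, there are two genuine gaps precisely at the points you flag as "bookkeeping."

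First, your pointwise bound $|\mathcal{F}(|u|^{4/(d-2)}u)(\xi)|\lesssim \||u|^{4/(d-2)}u\|_{L^1_x}$ is not available: that $L^1_x$ norm equals $\|u\|_{L^{(d+2)/(d-2)}_x}^{(d+2)/(d-2)}$, and the exponent $\tfrac{d+2}{d-2}$ lies strictly below the Sobolev exponent $\tfrac{2d}{d-2}$ (for $d=4$ this is $L^3$ versus the embedding $\dot H^1\hookrightarrow L^4$), so it is not controlled by the $\dot H^1$ bound or by the Strichartz norms of Theorem \ref{th2.6new}. The paper avoids this by keeping the factor $|\xi|$ attached to the nonlinearity, i.e. estimating $\nabla\bigl(|u|^{4/(d-2)}u\bigr)$ via H\"older as $\|\nabla u\|_{L^2}\,\|u\|_{L^{2d/(d-2)}}^{4/(d-2)}\lesssim \|u\|_{\dot H^1}^{(d+2)/(d-2)}$, which \emph{is} controlled. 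Second, the bootstrap cannot start directly with the polynomial weight $\rho(t)^2=\alpha/(1+t)$: with only the a priori boundedness of $\|u(t)\|_{\dot H^1}$, the nonlinear Duhamel contribution carries a factor $\bigl(\int_0^t\|u(s)\|_{\dot H^1}^{(d+2)/(d-2)}\,ds\bigr)^2\sim t^2$, which is too large for the Gronwall step to yield any decay — you would only recover boundedness. The paper inserts an intermediate stage with the logarithmic weight $g(t)=(\ln(e+t))^3$ to first upgrade boundedness to $\|u(t)\|_{\dot H^1}^2\lesssim(\ln(e+t))^{-2}$; only then does the time integral become summable enough for the polynomial-weight step (split according to $r^*>-1$ or $r^*\le -1$) to close via Gronwall. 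Your assertion that the nonlinear term is "of lower order" is exactly the statement that requires these two devices, so as written the argument does not close.
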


In Theorem \ref{th2.5v13}, we introduce the decay character $r^*$ of $|\nabla | u_0$. This concept is introduced by C. Bjorland and M. E. Schonbek \cite{BS}.

\begin{definition}[Decay character $r^*$ of $|\nabla | u_0$] \label{decay-indicator}
Let  $u_0 \in \dot{H}^1 (\mathbb{R}^d )$. For $r \in \left(- \frac{d }{2}, \infty \right)$, we define the decay indicator $P_r ( |\nabla | u_0)$ corresponding to $|\nabla | u_0$ as
\begin{displaymath}
P_r(|\nabla | u_0) = \lim _{\rho \to 0} \rho ^{-2r-d } \int _{B(0, \rho)}  |\xi|^2  \left| \left( \mathcal{F} {u_0} \right)  (\xi) \right|^2 \, d \xi,
\end{displaymath}
provided this limit exists, where $\mathcal{F}$ is the Fourier transform. The decay character of $ |\nabla | u_0$, denoted by $r^{\ast} = r^{\ast}( |\nabla | u_0)$ is the unique  $r \in \left( -\frac{d }{2}, \infty \right)$ such that $0 < P_r ( |\nabla | u_0) < \infty$, provided that this number exists. We set $r^{\ast} = - \frac{d }{2}$, when $P_r ( |\nabla | u_0)  = \infty$ for all $r \in \left( - \frac{d }{2}, \infty \right)$  or $r^{\ast} = \infty$, if $P_r ( |\nabla | u_0)  = 0$ for all $r \in \left( -\frac{d }{2}, \infty \right)$.
\end{definition}


%
In particular, in \cite{BS}, C. Bjorland and M. E. Schonbek proved
if $- \frac{d}{2 } < r^{\ast}< \infty$, for $\alpha > 0$, 
\begin{align}\label{eq2.18v16}
\Vert e^{t \alpha \Delta} u_0  \Vert _{ \dot{H}^1 } ^2 \sim  
 (1 + t)^{-   \left( \frac{d }{2} + r^{\ast} \right)}.
\end{align}
We use this estimate to the complex case to show \eqref{eq:main-estimate}.
Before giving the proof of  Theorem \ref{th2.5v13}, we first show the $\dot{H}^1$ norm is a Lyapunov function.
\begin{proposition}
\label{lyapunov-function}
Let $u_0 \in \dot{H}^{1} (\mathbb{R}^d)$  be small enough. Then,
the norm in $\dot{H}^{1} (\mathbb{R}^d )$ is a Lyapunov function for equation \eqref{eq:gl}.
\end{proposition}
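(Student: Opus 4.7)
The plan is to derive a differential inequality for $\|u(t)\|_{\dot H^1}^2$ showing that it is non-increasing whenever the norm stays small, and then to propagate the smallness forward in time by a standard continuity/bootstrap argument initiated from the small initial data.

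First I would differentiate and integrate by parts once to bring out a Laplacian, then substitute the equation $u_t = z\Delta u + z|u|^{\frac{4}{d-2}}u$ and use $\Re\bar z = \Re z > 0$:
\begin{align*}
\frac{d}{dt}\|u(t)\|_{\dot H^1}^2
= -2\Re\int \Delta u \cdot \overline{u_t}\, dx
= -2\Re z\,\|\Delta u\|_{L^2}^2 - 2\Re\left(\bar z\int \Delta u \cdot |u|^{\frac{4}{d-2}}\bar u\, dx\right).
\end{align*}
The first term is the good dissipation from the parabolic part; the remaining task is to absorb the second term into it. I would bound it by Cauchy--Schwarz and the Gagliardo--Nirenberg--Sobolev inequality
\begin{align*}
\bigl\||u|^{\frac{d+2}{d-2}}\bigr\|_{L^2}
= \|u\|_{L^{\frac{2(d+2)}{d-2}}}^{\frac{d+2}{d-2}}
\lesssim \|u\|_{\dot H^1}^{\frac{4}{d-2}}\,\|\Delta u\|_{L^2},
\end{align*}
which follows, for $d=3,4$, from interpolating $\dot H^s$ between $\dot H^1$ and $\dot H^2$ at the appropriate exponent and embedding into $L^{2(d+2)/(d-2)}$. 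Combining these steps yields
\begin{align*}
\frac{d}{dt}\|u(t)\|_{\dot H^1}^2
\leq \|\Delta u\|_{L^2}^2 \left( -2\Re z + C|z|\,\|u(t)\|_{\dot H^1}^{\frac{4}{d-2}} \right).
\end{align*}

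The continuity step then closes the argument. Choose $\|u_0\|_{\dot H^1}$ so small that $C|z|\,\|u_0\|_{\dot H^1}^{4/(d-2)} \leq \Re z$, and let $T^\ast$ be the supremum of times on which the same inequality remains valid. On $[0,T^\ast]$ the differential inequality forces $\frac{d}{dt}\|u(t)\|_{\dot H^1}^2 \leq -\Re z\,\|\Delta u\|_{L^2}^2 \leq 0$, so $\|u(t)\|_{\dot H^1}\leq \|u_0\|_{\dot H^1}$ throughout; the smallness assumption therefore bootstraps, $T^\ast = +\infty$, and $t\mapsto \|u(t)\|_{\dot H^1}$ is globally non-increasing, i.e.\ a Lyapunov function.

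The main technical obstacle I foresee is the rigorous justification of the pointwise-in-time differentiation and the integration by parts, which require $u(t)\in \dot H^2$ and $u_t(t)\in L^2$. This is precisely where the parabolic smoothing of $e^{zt\Delta}$ with $\Re z > 0$ intervenes: any $\dot H^1$ datum is instantly regularized, so the computation is legitimate for every $t > 0$ and then extended down to $t = 0$ via $u\in C^0_t\dot H^1$. An alternative, fully rigorous route would be to perform the above computation for a regularized problem (mollified data, truncated nonlinearity) and pass to the limit using the unconditional uniqueness of Lemma \ref{le2.3v3}.
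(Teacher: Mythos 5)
Your proof is correct and follows essentially the same route as the paper: both derive the $\dot H^1$ energy identity, bound the nonlinear contribution by $C\,\Vert \Delta u\Vert_{L^2}^2\,\Vert u\Vert_{\dot H^1}^{4/(d-2)}$ (you via Cauchy--Schwarz and Gagliardo--Nirenberg interpolation, the paper via H\"older and the fractional Leibniz rule applied to $\nabla(|u|^{4/(d-2)}u)$), and absorb it into the parabolic dissipation $-2\Re z\,\Vert\Delta u\Vert_{L^2}^2$ using smallness of the $\dot H^1$ norm. The only notable difference is that you propagate the smallness forward in time by a self-contained continuity/bootstrap argument, whereas the paper imports the uniform-in-time smallness of $\Vert u(t)\Vert_{\dot H^1}$ from its small-data well-posedness theory (Theorem \ref{th2.6new} and \cite{CZ}); both versions close the argument.
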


\begin{proof}
By integration by parts, we can obtain the following energy identity in \cite{CZ},
 \begin{align*}
\Vert u(t_2) \Vert ^2 _{\dot{H} ^1}  - \|u(t_1) \|_{\dot{H}^1}^2 &= \int_0^t \Re \int z \overline{\nabla u } \cdot \nabla \left( |u|^\frac4{d-2} u \right) \,\mathrm{d}x \mathrm{d}\tau \\
 &- \Re z \int_0^t \int |\Delta u |^2\, \mathrm{d}x \mathrm{d}\tau.
\end{align*}
where $t_2 > t_1$. Note that from \cite{CZ}, we have $\Delta u \in L_{t,x}^2 ( \mathbb{R}_+ \times \mathbb{R}^d)$, $\nabla u \in L_t^2 \dot{H}_x^1 ( \mathbb{R}_+ \times \mathbb{R}^d)$, thus
 the right hand side in this equality is finite.

By H\"older and fractional Leibnitz rule, we have that
\begin{align}\label{eq2.19v14}
\left| \Re \int z \overline{\nabla u } \cdot \nabla \left( |u|^\frac4{d-2} u \right) \,\mathrm{d}x \right|
\lesssim \| |\nabla | u \|_{L_x^\frac{2d}{d-2}}^2  \| u \|_{L_x^\frac{2d}{d-2}}^\frac4{d-2}
\lesssim  \left\|\nabla^2 u  \right\|_{L_x^2}^2 \|u \|_{\dot{H}^1_x}^\frac4{d-2},
\end{align}
and therefore
\begin{displaymath}
\Vert u(t_2) \Vert ^2 _{\dot{H} ^1}  + 2 \Re z\left( 1- C  \, \sup_{t > 0} \Vert u(t) \Vert^\frac4{d-2} _{\dot{H}^1} \right) \int _{t_1} ^{t_2} \Vert \nabla u(\tau) \Vert ^2  _{\dot{H}^1} \, d\tau \leq \Vert u(t_1) \Vert ^2 _{\dot{H} ^1} \,.
\end{displaymath}
Choosing $\Vert u_0 \Vert  _{\dot{H} ^1}$ small enough, by the well-posedness and Theorem \ref{th2.6new},
 we have that $\| u \|_{L_t^{\infty} \dot{H}_x^1}$ is small for all $t>0$. Hence $  C   \sup\limits_{t > 0} \Vert u(t) \Vert^\frac4{d-2} _{\dot{H}^1} \le 1$.
 Then for any $t_1 < t_2$, we have
\begin{displaymath}
\Vert u(t_2) \Vert_{\dot{H} ^1} \leq \Vert u(t_1) \Vert_{\dot{H} ^1},
\end{displaymath}
which entails the result.

\end{proof}

We can now give the proof of Theorem \ref{th2.5v13}.

\begin{proof}[Proof of Theorem \ref{th2.5v13}.]

From Proposition \ref{lyapunov-function}, we know that the $\dot{H} ^1$ norm is a non-increasing function, hence is differentiable a.e. Then
\begin{align}\label{eq:preparation-fourier-splitting}
\notag \frac{d}{dt} \Vert u(t) \Vert^2 _{\dot{H} ^1}
 & = 2 \left\langle  \nabla      u (t) , \nabla   \left( \Delta u (t)  +  |u|^\frac4{d-2} u (t) \right)   \right\rangle \\ \notag
  &
 = -2 \Vert \nabla u (t) \Vert^2 _{\dot{H} ^1} + 2 \left\langle  \nabla  u (t) ,
  \nabla  \left(  |u|^\frac4{d-2}  u (t)  \right)  \right\rangle \notag \\
  & \leq - 2   \left(1 -  C \Vert  u(t) \Vert^\frac4{d-2} _{\dot{H} ^1} \right) \Vert \nabla u(t) \Vert^2 _{\dot{H} ^1}
  \leq - {C}_0  \Vert \nabla u(t) \Vert^2 _{\dot{H} ^1},
\end{align}
where we have used \eqref{eq2.19v14} and also the smallness of $\| u \|_{L_t^{\infty} \dot{H}_x^1}$ for all $t>0$.

We now use the Fourier splitting method. Let $B(t)$ be a ball around the origin in frequency space with  continuous, time-dependent radius $\rho(t)$ such that
\begin{displaymath}
B(t) = \left\{\xi \in \mathbb{R}^d : |\xi| \leq \rho(t) = \left( \frac{g'(t)}{ {C}_0  g(t)} \right) ^{\frac{2(d+4)}{d(3d-4)}}  \right\},
\end{displaymath}
with $g$ an increasing continuous function such that $g(0) = 1$.  Then by decomposing the frequency domain $\mathbb{R}^d$ into $B(t)$ and $B(t)^c = \mathbb{R}^d \backslash B(t)$, we have
\begin{align*}
\notag \frac{d}{dt} \Vert u(t) \Vert^2 _{\dot{H} ^1} & \leq -  {C}_0  \Vert \nabla u(t) \Vert^2 _{\dot{H} ^1} =  - {C}_0 \int _{\mathbb{R}^d } |\xi|^2 \,
\left||\xi| \mathcal{F} u (t, \xi ) \right|^2 \, d \xi \notag \\ & = -  {C}_0  \int _{B(t)} |\xi|^2 \,   \left||\xi| \mathcal{F} {u} (t, \xi ) \right|^2 d \xi   -  {C}_0  \int _{B(t) ^{c}} |\xi|^2 \,   \left||\xi| \mathcal{F} {u} (t, \xi ) \right|^2 \, d \xi \notag \\ & \leq -  \frac{g'(t)}{g(t)}  \int _{B(t) ^{c}}
\left||\xi|  \mathcal{F} {u} (t , \xi ) \right|^2 \, d \xi \notag \\
& =      \frac{g'(t)}{g(t)}  \int _{B(t)}  \left||\xi| \mathcal{F} {u} (t, \xi ) \right|^2 \, d \xi -  \frac{g'(t)}{g(t)}  \int _{\mathbb{R}^d }  \left||\xi|  \mathcal{F} {u} (t, \xi ) \right|^2 \, d \xi.
\end{align*}
Now  multiply both sides by $g(t)$ and rewrite the above inequality in order to obtain the key inequality
\begin{equation}
\label{eqn:key-inequality}
\frac{d}{dt}   \left( g(t)  \Vert u (t) \Vert _{\dot{H} ^1} ^2 \right) \leq g'(t)  \int _{B(t)}  \left||\xi|  \mathcal{F} {u} (t , \xi ) \right| ^2 \, d \xi,
\end{equation}
which allows us to estimate the left-side of \eqref{eqn:key-inequality} by $\mathcal{F} {u}$ over the ball $B(t)$.
We now need a pointwise estimate for $ \left||\xi|  \mathcal{F} {u} (t, \xi ) \right|$ in $B(t)$ in order to go forward.  We first have
\begin{align}
\int _{B(t)} \left||\xi|  \mathcal{F} {u} (t, \xi ) \right| ^2 \, d \xi & \leq C \int _{B(t)} \left| e^{- z t |\xi| ^2} |\xi| \mathcal{F} {u_0} (t, \xi ) \right|^2 \, d \xi \notag \\ & + C \int _{B(t)} \left| \int _0 ^t  e^{- z (t-s) |\xi| ^2} |\xi| \mathcal{F} \left(|u|^\frac4{d-2}  u \right) (s, \xi ) \, ds \right| ^2 \, d \xi.
\end{align}
The first term corresponds to the linear part,
so by \eqref{eq2.18v16}, 
 this can be estimated as
\begin{equation}
\label{eq:decay-linear-part}
\int _{B(t)} \left| e^{- z t |\xi| ^2} |\xi| \mathcal{F} {u_0} (t, \xi ) \right|^2 \, d \xi
\leq C \Vert e^{ t \Re z  \Delta} |\nabla |  u_0 \Vert^2 _{L^2}
\leq C (1 + t) ^{- \left( \frac{d}2  + r^{\ast} \right)},
\end{equation}
where $r^{\ast} = r^{\ast} \left( |\nabla |  u_0 \right)$.

 Now, for the nonlinear term, by Hausdorff-Young inequality inside the time integral we have,
\begin{align}
& \left( \int _{B(t)} \left|  \int _0 ^t  e^{- \Re z (t-s) |\xi| ^2} |\xi| \mathcal{F} \left(|u|^\frac4{d-2} u \right) (s, \xi ) \, \mathrm{d} s \right| ^2 \, \mathrm{d} \xi \right)^\frac12  \notag \\
& \le \int_0^t \left( \int_{B(t)} e^{- 2 \Re z (t-s) | \xi|^2 } |\xi|^2  \left|\mathcal{F}  \left( |u|^\frac4{d-2} u \right) (s, \xi) \right|^2\, \mathrm{d}\xi \right)^\frac12 \,\mathrm{d}s \notag \\
& \le \int_0^t \left(  \left( \int_{B(t)}   \left(e^{- 2 \Re z (t-s) |\xi|^2} \right)^\frac{d+4}{3d-4} \,\mathrm{d} \xi \right)^\frac{3d-4}{d+4}
\left\| |\xi| \mathcal{F} \left( |u|^\frac4{d-2} u \right) \right\|_{L_\xi^\frac{d+4}{4-d}}^2 \right)^\frac12 \,\mathrm{d}s \notag\\
& \le |B(t)|^\frac{3d-4}{2(d+4)}  \int_0^t  \left\| \nabla  \left( |u|^\frac4{d-2} u  \right)  \right\|_{L_x^\frac{d+4}{2d} } \,\mathrm{d}s \notag \\
& \le |B(t)|^\frac{3d-4}{2( d+4) } \int_0^t \|\nabla u \|_{L^2} \|u \|_{L_x^\frac{2d}{d-2}}^\frac4{d-2} \,\mathrm{d}s
 \le C \rho(t)^\frac{d (3d-4)}{2(d+4) } \int_0^t \|  u \|_{\dot{H}^1}^\frac{d+2}{d-2} \,\mathrm{d}s,
\end{align}
which leads to
\begin{align}
\label{eqn:estimate}
\frac{d}{dt}   \left( g(t)  \Vert u (t) \Vert _{\dot{H} ^1} ^2 \right)   \leq C g'(t)  (1 + t) ^{- \left( 2 + r^{\ast} \right)}
  + C g'(t) r(t) ^\frac{d(3d-4)}{d+4}    \left( \int_0 ^t \Vert u(s)  \Vert _{\dot{H}^1} ^\frac{d+2}{d-2} \, ds \right) ^2,
\end{align}
We now use a bootstrap argument to prove the estimate in Theorem \ref{th2.5v13}.  More precisely,  we first obtain a weaker estimate for $u$ which we then plug in in the right hand side of \eqref{eqn:estimate} to find a stronger one. To get started this, we take $g(t) = \left( \ln (e+t) \right)^3$, so that
\begin{align*}
\rho(t) = \left(\frac{g'(t)}{ {C}_0 g(t)}\right)^{\frac{2(d+4)}{d(3d-4)}} = \left(  \frac{3}{ {C}_0 (e+t) \ln (e+t)} \right) ^{\frac{2(d+4)}{d(3d-4)}}.
\end{align*}
From Theorem \ref{th2.6new}, we know a weaker estimate that $\Vert u(t) \Vert _{\dot{H} ^1} \leq C$ for all $ t >0$. Then from \eqref{eqn:estimate}, we obtain
\begin{align}\label{est:non_1}
\frac{d}{dt}   \left( \left( \ln (e+t) \right)^3  \Vert u (t) \Vert _{\dot{H} ^1} ^2 \right)  \leq C \frac{\left( \ln (e+t) \right)^2}{e+t}  (1 + t) ^{- \left( 2 + r^{\ast} \right)} + C \frac{1}{(e+t)^3}  t ^2.
\end{align}
Now notice that the first term in the right-hand side of \eqref{est:non_1} is integrable. More precisely, 
\begin{align*}
\int _0 ^t \frac{\left( \ln (e+s) \right)^2}{e+s}  (1 + s) ^{- \left( 2 + r^{\ast} \right)} \, ds & \leq C \int _1 ^{\ln (e+s)} u^2 \, e^{-(2+ r^{\ast})u} \, du  \\ & < \infty 
\end{align*}
Then \eqref{est:non_1} implies
\begin{equation}\label{eqn:first-decay}
\Vert u(t) \Vert _{\dot{H} ^1} ^2  \leq C  \left( \ln (e+t) \right)^{-2},
\end{equation}
which is a better estimate than the boundedness of $u(t) \Vert _{\dot{H} ^1}$.
We now use this estimate to do our bootstrap argument. As a trivial start point, take $g(t) = (1 + t) ^{\alpha}$, with $\alpha  >0 $ to be determined later, so $ \rho(t) = \left(\frac{\alpha}{ {C}_0 (1+t)}\right)^{\frac{2(d+4)}{d(3d-4)}}$.   Then,  from \eqref{eqn:estimate} and \eqref{eqn:first-decay} we obtain,  after integrating,  that
\begin{displaymath}
 \Vert u (t) \Vert _{\dot{H} ^1} ^2  \leq C (1+t) ^{-\alpha} + C (1 + t) ^{- \left( 2 + r^{\ast} \right)} + C (1+t)^{-1}  \int _0 ^t \frac{ \Vert u (s) \Vert _{\dot{H} ^1} ^2}{ \left( \ln (e+s) \right)^{4(6-d)}} \, ds,
\end{displaymath}
which, after restricting $\alpha > \max \left(  2 + r^{\ast}, 1 \right) $, leads to
\begin{equation}
\label{eqn:previous-estimate}
 \Vert u (t) \Vert _{\dot{H} ^1} ^2  \leq  C (1 + t) ^{- \left( 2 + r^{\ast} \right)} + C(1+t)^{-1}  \int _0 ^t \frac{ \Vert u (s) \Vert _{\dot{H} ^1} ^2}{  \left(\ln (e+s) \right)^{4(6-d)}} \, ds.
\end{equation}
If $r^{\ast} >-1$, we directly use Gronwall's inequality to obtain
\begin{equation*}
\Vert u (t) \Vert _{\dot{H} ^1} ^2  \leq  C (1 + t) ^{- \left( 2 + r^{\ast} \right)} + C (1+t)^{-1} \leq  C (1+t)^{-1}.
\end{equation*}
Otherwise, for $r^{\ast} \leq -1$, we rewrite (\ref{eqn:previous-estimate}) as
\begin{align*}
(1 + t)  \Vert u (t) \Vert _{\dot{H}^1}^2  \leq  C (1 + t)^{1 - \left( 2 + r^{\ast} \right)} + C  \int _0 ^t \frac{ (1 + s) \Vert u (s) \Vert _{\dot{H} ^1} ^2}{(1 + s)  \left( \ln (e+s) \right)^{4(6-d)}} \, ds.
\end{align*}
Then we still use Gronwall's inequality to get
\begin{align*}
(1 + t)  \Vert u (t) \Vert _{\dot{H} ^1} ^2  \leq  C (1 + t) ^{1 - \left( 2 + r^{\ast} \right)}.
\end{align*}
Therefore We have proved Theorem \ref{th2.5v13}.

\end{proof}

\subsection{The case (C)}
 \begin{theorem}
     Suppose that $E(u_0)= E(W)$, and $\|u_0\|_{\dot{H}^1} > \|W\|_{\dot{H}^1}$.
 Then the solution $u$  blows up in finite time.

  \end{theorem}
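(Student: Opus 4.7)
My plan mirrors the proof of case (B) with the roles of the strict inequalities reversed. Let $[0,T_*)$ denote the maximal lifespan of the solution. I would first show the kinetic-energy gap $\|u(t)\|_{\dot H^1} > \|W\|_{\dot H^1}$ persists on $[0,T_*)$, then produce a time $t_0 \in (0,T_*)$ at which the energy strictly drops below $E(W)$, and finally restart the flow from $u(t_0)$ and invoke Theorem \ref{th1.1v3}(2) of \cite{CGZ} to conclude forward finite-time blow-up.

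For the persistence of the gap, suppose for contradiction that $t_1 \in (0,T_*)$ is the first time at which $\|u(t_1)\|_{\dot H^1} = \|W\|_{\dot H^1}$. The dissipation identity $\frac{d}{dt}E(u) = -\Re z\int|u_t|^2$ yields $E(u(t_1)) \le E(u_0) = E(W)$. On the other hand, on the $\dot H^1$-sphere of radius $\|W\|_{\dot H^1}$ the sharp Sobolev inequality (Lemma \ref{le2.1v11}) forces $\|u(t_1)\|_{L^{2d/(d-2)}_x} \le \|W\|_{L^{2d/(d-2)}_x}$, and hence $E(u(t_1)) \ge E(W)$. Equality follows, and Theorem \ref{th2.3v11} together with the uniqueness Lemma \ref{le2.3v3} identifies $u(t_1)$ with a scaled translate of $W$. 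Because $E$ is non-increasing on $[0,t_1]$ and agrees at both endpoints, $u_t \equiv 0$ on this interval, so $u_0 = u(t_1)$ is itself a scaled translate of $W$, contradicting $\|u_0\|_{\dot H^1} > \|W\|_{\dot H^1}$.

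For the second step, assume by contradiction that $E(u(t)) \equiv E(W)$ on $[0,T_*)$. The same dissipation identity gives $u_t \equiv 0$, so $u \equiv u_0$ is a stationary $\dot H^1$ solution of $-\Delta u_0 = |u_0|^{4/(d-2)}u_0$. Pairing this equation with $u_0$ yields $\|u_0\|_{\dot H^1}^2 = \|u_0\|_{L^{2d/(d-2)}_x}^{2d/(d-2)}$, and hence $E(u_0) = \frac{1}{d}\|u_0\|_{\dot H^1}^2$; the same identity for $W$ combined with $\|u_0\|_{\dot H^1} > \|W\|_{\dot H^1}$ then gives $E(u_0) > E(W)$, a contradiction. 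Thus some $t_0$ with $E(u(t_0)) < E(W)$ must exist, and combined with $\|u(t_0)\|_{\dot H^1} > \|W\|_{\dot H^1}$ from step 1, Theorem \ref{th1.1v3}(2) applied to $u(t_0)$ (taken in $H^1$, as the cited blow-up criterion requires) produces finite-time blow-up. The main obstacle is the first step: back-propagating the case-(A) rigidity through the variational dichotomy at $t_1$ relies on both Lemma \ref{le2.3v3} and the constancy of $E$ on the sub-interval $[0,t_1]$, with the dissipative assumption $\Re z > 0$ playing an essential role in making this backward propagation clean.
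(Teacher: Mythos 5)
Your proposal is correct and follows essentially the same route as the paper: show the gap $\|u(t)\|_{\dot H^1}>\|W\|_{\dot H^1}$ persists, locate a time where the energy strictly drops below $E(W)$ (ruling out constant energy via $u_t\equiv 0$ and the stationarity of $u$), and then restart the flow and invoke the sub-threshold blow-up result of Theorem \ref{th1.1v3}(2). Your Pohozaev-type pairing in the second step is in fact a cleaner way to reach the contradiction than the paper's appeal to uniqueness, and your parenthetical caveat about the $H^1$ hypothesis in the cited blow-up criterion points at a gap the paper itself glosses over.
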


\begin{proof}

By the continuity argument, we have
\begin{align*}
\|u(t) \|_{\dot{H}^1 } > \|W\|_{\dot{H}^1 }
\end{align*}
and
\begin{align*}
E(u(t)) \le E(u_0) = E(W), \forall\, t \ge 0.
\end{align*}
We find there is $t_1 > 0 $ such that
\begin{align*}
E(u(t_1)) <E(W)
\text{ and }
\|u(t_1) \|_{\dot{H}^1} > \|W\|_{\dot{H}^1 }.
\end{align*}
Indeed, if not, we can do as we have done in the proof of Theorem \ref{th2.6new}.

Thus, we can take $t_1$ as the new initial time, then by the result in \cite{CGZ}, we get finite time blow-up.

\end{proof}

\noindent \textbf{Acknowledgments. }

The authors are grateful to Lifeng Zhao for helpful discussion.

\end{document}